\def\UseBibLatex{1}
\def\input@path{{styles/}}
\newcommand{\UsePackage}[1]{%
  \IfFileExists{styles/#1.sty}{%
      \usepackage{styles/#1}%
   }{%
      \IfFileExists{../styles/#1.sty}{%
         \usepackage{../styles/#1}%
      }{%
         \usepackage{#1}%
      }%
   }%
}
\theoremstyle{plain}%
\newtheorem{theorem}{Theorem}[section]
\newtheorem{lemma}[theorem]{Lemma}
\newtheorem{corollary}[theorem]{Corollary}
\newtheorem{question}[theorem]{Question}
\newtheorem{proposition}[theorem]{Proposition}
\theoremstyle{plain}%
\newtheorem*{remark:unnumbered}[theorem]{Remark}%
\newtheorem{remark}[theorem]{Remark}%
\newtheorem{definition}[theorem]{Definition}
\newtheorem{example}[theorem]{Example}
\newcommand{\myqedsymbol}{\rule{2mm}{2mm}}
\theoremstyle{nonumberplain}%
\newtheorem{proof}{Proof:}%
\newtheorem{proofof}{Proof of\!}%
\providecommand{\emphind}[1]{}%
\renewcommand{\emphind}[1]{\emph{#1}\index{#1}}
\definecolor{blue25emph}{rgb}{0, 0, 11}
\providecommand{\emphic}[2]{}
\renewcommand{\emphic}[2]{\textcolor{blue25emph}{%
      \textbf{\emph{#1}}}\index{#2}}
\providecommand{\emphi}[1]{}%
\renewcommand{\emphi}[1]{\emphic{#1}{#1}}
\definecolor{almostblack}{rgb}{0, 0, 0.3}
\providecommand{\emphw}[1]{}%
\renewcommand{\emphw}[1]{{\textcolor{almostblack}{\emph{#1}}}}%
\providecommand{\emphOnly}[1]{}%
\renewcommand{\emphOnly}[1]{\emph{\textcolor{blue25}{\textbf{#1}}}}
\newcommand{\RaghavThanks}[1]{%
   \thanks{%
      Department of Mathematics; %
      University of Washington; %
      Seattle, USA;
      \href{mailto:raghavt@uw.edu}{raghavt@uw.edu}. %
   #1%
   }%
}
\newcommand{\HLink}[2]{\hyperref[#2]{#1~\ref*{#2}}}
\newcommand{\HLinkSuffix}[3]{\hyperref[#2]{#1\ref*{#2}{#3}}}
\newcommand{\remlab}[1]{\label{rem:#1}}
\newcommand{\remref}[1]{\HLink{Remark}{rem:#1}}%
\providecommand{\deflab}[1]{}
\renewcommand{\deflab}[1]{\label{def:#1}}
\newcommand{\lemlab}[1]{\label{lemma:#1}}
\newcommand{\lemref}[1]{\HLink{Lemma}{lemma:#1}}%
\providecommand{\eqlab}[1]{}%
\renewcommand{\eqlab}[1]{\label{equation:#1}}
\newcommand{\Eqref}[1]{\HLinkSuffix{Eq.~(}{equation:#1}{)}}
\newcommand{\remove}[1]{}%
\newlist{compactenumA}{enumerate}{5}%
\setlist[compactenumA]{topsep=0pt,itemsep=-1ex,partopsep=1ex,parsep=1ex,%
   label=(\Alph*)}%
\newlist{compactenuma}{enumerate}{5}%
\setlist[compactenuma]{topsep=0pt,itemsep=-1ex,partopsep=1ex,parsep=1ex,%
   label=(\alph*)}%
\newlist{compactenumI}{enumerate}{5}%
\setlist[compactenumI]{topsep=0pt,itemsep=-1ex,partopsep=1ex,parsep=1ex,%
   label=(\Roman*)}%
\newlist{compactenumi}{enumerate}{5}%
\setlist[compactenumi]{topsep=0pt,itemsep=-1ex,partopsep=1ex,parsep=1ex,%
   label=(\roman*)}%
\newlist{compactitem}{itemize}{5}%
\setlist[compactitem]{topsep=0pt,itemsep=-1ex,partopsep=1ex,parsep=1ex,%
   label=\ensuremath{\bullet}}%
\providecommand{\BibLatexMode}[1]{}
\providecommand{\BibTexMode}[1]{}
  \renewcommand{\BibLatexMode}[1]{}
  \renewcommand{\BibTexMode}[1]{#1}
  \renewcommand{\BibLatexMode}[1]{#1}
  \renewcommand{\BibTexMode}[1]{}
\numberwithin{figure}{section}%
\numberwithin{table}{section}%
\numberwithin{equation}{section}%
\newcommand{\R}{\mathbb{R}}
\newcommand{\Bcal}{\mathcal{B}}
\newcommand{\Ecal}{\mathcal{E}}
\newcommand{\Hcal}{\mathcal{H}}
\newcommand{\maxtr}[1]{\mathrm{maxTrace}\left(#1\right)}
\newcommand{\Pcal}{\mathcal{P}}
\newcommand{\N}{\mathbb{N}}
\providecommand{\keywords}[1]{\textbf{\textit{Keywords---}} #1}
\providecommand{\subjclass}[1]{{\textit{2020 Mathematics Subject Classification : }} #1\newline}
\begin{document}

\title{Some observations on Erd\H{o}s matrices}

\author{
   Raghavendra Tripathi
   \RaghavThanks{}
}

\maketitle

\begin{abstract}
   In a seminal paper in 1959, Marcus and Ree proved that every $n\times n$ bistochastic matrix $A$ satisfies $\|A\|_{\operatorname{F}}^2\leq \max_{\sigma\in S_n}A_{i,\sigma(i)}$ where $S_n$ is the symmetric group on $\{1, \ldots, n\}$. Erd\H{o}s asked to characterize the bistochastic matrices for which the equality holds in the Marcus--Ree inequality. We refer to such matrices as Erd\H{o}s matrices. While this problem is trivial in dimension $n=2$, the case of dimension $n=3$ was only resolved recently in~\cite{bouthat2024question} in 2023. We prove that for every $n$, there are only finitely many $n\times n$ Erd\H{o}s matrices. We also give a characterization of Erd\H{o}s matrices that yields an algorithm to generate all Erd\H{o}s matrices in any given dimension. We also prove that Erd\H{o}s matrices can have only rational entries. This answers a question of~\cite{bouthat2024question}.  
\end{abstract}

\subjclass{15A15, 15A45, 15B36, 15B51}
\keywords{Bistochastic matrix, Doubly stochastic matrix, maximal trace, Frobenius norm}

\section{Introduction}
\label{sec:Intro}
A bistochastic matrix $A\in \R^{n\times n}$ is an $n\times n$ matrix with non-negative entries such that the entries in each row and each column sum to $1$, that is, $\sum_{k=1}^{n} A_{i, k}=1=\sum_{k=1}^{n}A_{k, i}$ for all $i\in [n]\coloneqq \{1, \ldots, n\}$. The famous Birkhoff--von Neumann theorem~\cite{birkhoff1946three} states that the set of all $n\times n$ bistochastic matrices, $\Bcal_n$, is the convex hull generated by the set $\Pcal_n$ of $n\times n$ permutation matrices. Bistochastic matrices are ubiquitous objects in probability theory, graph theory, and many more areas of mathematics. Naturally, they have been extensively studied and continue to be investigated even today.

In 1959, Marcus and Ree~\cite{marcus1959diagonals} proved that any bistochastic matrix $A\in \R^{n\times n}$ satisfies
\begin{equation}
\eqlab{MarcusRee}
\|A\|_{\operatorname{F}}^2\leq \max_{\sigma\in S_n} \sum_{i=1}^{n}A_{i, \sigma(i)}\;,
\end{equation}
where $S_n$ is the set of all permutations of the set $[n]$ and $\|\cdot\|_{\operatorname{F}}$ denotes the usual Frobenius norm of a matrix, that is, $\|A\|_{\operatorname{F}}^2 = \sum_{i, j=1}^{n}A_{i, j}^2$.  It is well-known that the Frobenius norm is induced by the Frobenius inner-product $\langle A, B\rangle_{\operatorname{F}}\coloneqq \mathrm{Tr}(AB^{T})$ on the space of $n\times n$ real matrices. Here $B^{T}$ denotes the transpose of the matrix $B$. The proof of~\Eqref{MarcusRee} is an immediate consequence of Birkhoff--von Neumann theorem and the fact that $B\mapsto \langle A, B\rangle_{\operatorname{F}}$ is a convex function. 

Following~\cite{bouthat2024question}, we refer to the quantity $\max_{\sigma\in S_n}A_{i,\sigma(i)}$ as the maximal trace of $A$ and denote it by $\maxtr{A}$. For later use, we record 
\[\maxtr{A}\coloneqq \max_{\sigma\in S_n} \sum_{i=1}^{n}A_{i, \sigma(i)}=\max_{P\in \Pcal_n} \langle A, P\rangle_{\operatorname{F}}.\] 
This quantity has been investigated in the context of assignment problems. We refer the reader to~\cite{balasubramanian1979maximal,wang1974maximum,brualdi2022diagonal} and the references therein for more detail. 
Seeing the inequality~\Eqref{MarcusRee}, Erd\H{o}s asked the following question.
\begin{question}
\label{ques:Erdos}
    Characterize the matrices $A\in \Bcal_n$ such that $\|A\|_{\operatorname{F}}^2 = \maxtr{A}.$
\end{question}
Throughout this article, we will refer to such matrices as $0$-Erd\H{o}s matrices (for the reason that will be apparent soon) or simply as the Erd\H{o}s matrices.  Since both the functions $A\mapsto \|A\|_{\operatorname{F}}^2$ and $A\mapsto \maxtr{A}$ remain unchanged if $A$ is replaced by $PAQ$ for some permutation matrices $P$ and $Q$, it is clear that if $A$ is an Erd\H{o}s matrix then so are $PAQ$ for any permutation matrices $P$ and $Q$. Throughout this paper, we will say $A$ and $B$ are \emph{equivalent}, denoted $A\sim B$, if  $A=PBQ$ for some permutation matrices $P$ and $Q$. It makes sense to characterize the Erd\H{o}s matrices up to the equivalence relation $A\sim B$.

Characterizing $2\times 2$ Erd\H{o}s matrices is trivial.  It is easily verified (see~Section \ref{par:twobytwo}) that (up to the equivalence) there are precisely two Erd\H{o}s matrices in $\Bcal_2$, namely, 
\[
I_2 = \begin{pmatrix}
1 & 0 \\ 0 & 1
\end{pmatrix}\;, \quad\quad J_2 = \begin{pmatrix} \frac{1}{2} & \frac{1}{2}\\ \frac{1}{2} & \frac{1}{2} \end{pmatrix}.
\]
These two examples naturally generalize to the arbitrary dimensions. It is easy to check that the identity matrix $I_n\in \Bcal_n$ and the $n\times n$ matrix $J_n\in \Bcal_n$ all of whose entries are $\frac{1}{n}$ are Erd\H{o}s matrices. Marcus and Ree~\cite{marcus1959diagonals} gave several other examples of Erd\H{o}s matrices for general dimension $n$ and proved some interesting partial results.  However, a satisfactory resolution of the problem remained still elusive. While the bistochastic matrices continued to be investigated in various contexts, this particular problem (Question~\ref{ques:Erdos}) seems to have been forgotten until recently. A complete characterization of $3\times 3$ Erd\H{o}s matrices has only been obtained recently in~\cite{bouthat2024question}.  It is shown in~\cite{bouthat2024question} that, up to equivalence, there are precisely $6$ Erd\H{o}s matrices in $\Bcal_3$. For completeness and the reader's convenience, we describe these $6$ matrices below: 
\begin{align*}
I_n &= \begin{pmatrix}
1 & 0 & 0 \\ 0 & 1& 0\\ 0 & 0 & 1
\end{pmatrix}, \quad J_3 = \frac{1}{3}\begin{pmatrix}
1 & 1 & 1\\ 1 & 1 & 1\\ 1& 1 &1
\end{pmatrix}, \quad I\oplus J_2 = \begin{pmatrix}
1 & 0 & 0 \\ 0 & \frac{1}{2} & \frac{1}{2}\\ 0 & \frac{1}{2} & \frac{1}{2}
\end{pmatrix},\\
S &= \begin{pmatrix}
   0 & \frac{1}{2} & \frac{1}{2}\\
   \frac{1}{2} & \frac{1}{4} & \frac{1}{4}\\
   \frac{1}{2} & \frac{1}{4} & \frac{1}{4}
\end{pmatrix}, \quad T= \begin{pmatrix}
    0 & \frac{1}{2} & \frac{1}{2}\\
    \frac{1}{2} & 0 & \frac{1}{2}\\
    \frac{1}{2} & \frac{1}{2} & 0
\end{pmatrix}, \quad R=\begin{pmatrix}
    \frac{3}{5} & 0 & \frac{2}{5}\\
    0 & \frac{3}{5} & \frac{2}{5}\\
    \frac{2}{5} & \frac{2}{5} & \frac{1}{5}
\end{pmatrix}\;.
\end{align*}
The matrix $I\oplus J_2$ and $T=\frac{1}{2}(3J_3-I_3)$ also generalize to higher dimensions, that is, $\frac{1}{n-1}(nJ_n-I_n)\in \Bcal_n$ is an Erd\H{o}s matrix. The matrices $S$ and $R$ are essentially the new contributions from~\cite{bouthat2024question} and the authors remark that it is unclear if the matrices $S$ and $R$ generalize to the higher dimensions in any natural way.

While all Erd\H{o}s matrices in dimensions $n=2, 3$ are equivalent to some symmetric matrix, this is not true in general. In~\cite{bouthat2024question}, the authors produce the following example of a $4\times 4$ Erd\H{o}s matrix that is not equivalent to any symmetric matrix:
\[\frac{1}{6}\begin{pmatrix}
    3 & 3 & 0 & 0 \\
    1 & 1 & 2 & 2 \\
    1 & 1 & 2 & 2 \\
    1 & 1 & 2 & 2
\end{pmatrix}\;.\]
We highly recommend~\cite{bouthat2024question} for a clear overview and the history of this problem. The discussion so far naturally suggests the following problems, that we answer in this paper.
\begin{enumerate}
  \item Are there only finitely many Erd\H{o}s matrices in $\Bcal_n$ for each $n$?
  \item (~\cites[Question 3]{bouthat2024question}) Do Erd\H{o}s matrices have only rational entries? 
\end{enumerate}
We answer the above question affirmatively in Theorem~\ref{thm:MainTheorem}. Theorem~\ref{thm:MainTheorem} builds on the following proposition that gives a characterization of the Erd\H{o}s matrices that is of independent interest. 
\begin{proposition}
\label{prop:Characterization}
Let $n\in \N$ and let $A=\sum_{i=1}^{m}x_iP_i\in \mathcal{B}_n$ where $P_i$ are $n\times n$ permutation matrices and $x_i>0$ for all $i\in [m]$ such that $\sum_{i=1}^{m}x_i=1$. If $A$ is an Erd\H{o}s matrix, then ${\bf x}=(x_1, \ldots, x_m)^{T}
\in \R^m$ solves the system of equations $M{\bf x}=\langle M{\bf x}, {\bf x}\rangle \mathbbm{1}_m$ where $M\in \mathbb{R}^{m\times m}$ is a matrix such that $M_{i, j}= \langle P_i, P_j\rangle_{\operatorname{F}}$ and $\mathbbm{1}_{m}\in \mathbb{R}^m$ is a vector all whose coordinates are $1$.
\end{proposition}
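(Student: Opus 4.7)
The plan is to exploit two parallel expressions for the Frobenius norm squared and to leverage the observation that $(M\mathbf{x})_i$ has a clean interpretation as $\langle A, P_i\rangle_{\operatorname{F}}$. First I would unpack what $\langle M\mathbf{x},\mathbf{x}\rangle$ is: bilinearity of $\langle\cdot,\cdot\rangle_{\operatorname{F}}$ gives
\[
\|A\|_{\operatorname{F}}^2=\Big\langle\sum_{i}x_iP_i,\sum_{j}x_jP_j\Big\rangle_{\operatorname{F}}=\sum_{i,j}x_ix_j\langle P_i,P_j\rangle_{\operatorname{F}}=\mathbf{x}^T M\mathbf{x}=\langle M\mathbf{x},\mathbf{x}\rangle.
\]
Next, a direct computation shows $(M\mathbf{x})_i=\sum_{j}x_j\langle P_i,P_j\rangle_{\operatorname{F}}=\langle P_i,A\rangle_{\operatorname{F}}$. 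Since $P_i\in\Pcal_n$, the definition of the maximal trace immediately yields the pointwise inequality $(M\mathbf{x})_i\le\maxtr{A}$ for every $i\in[m]$.

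The crucial combination is now a one-line convexity argument. Using $\sum_i x_i=1$, I would write
\[
\maxtr{A}-\|A\|_{\operatorname{F}}^2=\sum_{i=1}^{m}x_i\bigl(\maxtr{A}-(M\mathbf{x})_i\bigr),
\]
where I used the identity $\|A\|_{\operatorname{F}}^2=\langle A,A\rangle_{\operatorname{F}}=\sum_i x_i\langle P_i,A\rangle_{\operatorname{F}}=\sum_i x_i(M\mathbf{x})_i$. Because $A$ is an Erd\H{o}s matrix, the left-hand side vanishes; because each summand on the right is a non-negative number (a strictly positive weight times a non-negative quantity), each summand must be zero. Hence $(M\mathbf{x})_i=\maxtr{A}$ for every $i\in[m]$, and since $\maxtr{A}=\|A\|_{\operatorname{F}}^2=\langle M\mathbf{x},\mathbf{x}\rangle$, this is precisely the claimed system $M\mathbf{x}=\langle M\mathbf{x},\mathbf{x}\rangle\mathbbm{1}_m$.

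Overall I do not anticipate a substantial obstacle here: the proposition is essentially the equality case of a convex extremal characterization, and the argument is a bookkeeping exercise around the Frobenius inner product together with the tautology that the value of a convex combination equals its maximum if and only if every participating extreme point already attains the maximum. The only point demanding care is making sure the hypothesis $x_i>0$ is actually used, which it is in the last step where we divide out positive weights to pass from $\sum_i x_i\bigl(\maxtr{A}-(M\mathbf{x})_i\bigr)=0$ to the termwise equalities; if some $x_i$ were allowed to vanish, the system would only be guaranteed to hold on the support of $\mathbf{x}$.
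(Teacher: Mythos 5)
Your proof is correct and follows essentially the same route as the paper: both expand $\|A\|_{\operatorname{F}}^2=\sum_i x_i\langle A,P_i\rangle_{\operatorname{F}}=\langle M\mathbf{x},\mathbf{x}\rangle$, note $\langle A,P_i\rangle_{\operatorname{F}}\le\maxtr{A}$, and use the strict positivity of the weights to force termwise equality in the convex combination. Your write-up merely makes explicit the equality-case step that the paper states more tersely.
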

In this proposition (and hereafter) we use the notation $\langle\cdot, \cdot\rangle$ to denote the usual inner product in $\R^m$. This proposition provides an algorithmic procedure to test and generate all Erd\H{o}s matrices. This can be used to compute all the Erd\H{o}s matrices--at least in small dimensions. Following the proof of Theorem~\ref{thm:MainTheorem} and the above proposition one can obtain a somewhat better algorithm to generate all the Erd\H{o}s matrices as we explain later. To illustrate this, we revisit the case of dimension $n=3$ in Section~\ref{sec:Dim3} where we compute all $3\times 3$ Erd\H{o}s matrices (up to the equivalence). While the results in Section~\ref{sec:Dim3} are not new, we feel that our method makes the results of~\cite{bouthat2024question} more transparent and provides a more conceptual understanding of the Erd\H{o}s matrices. Some of the computations in Section~\ref{sec:Dim3} extend to higher dimensions as well, thus producing a new class of examples of Erd\H{o}s matrices in all dimensions. This yields a lower bound of $p(n)$, the number of partitions of $n$, on the number of Erd\H{o}s matrices in $\Bcal_n$ (see Proposition~\ref{prop:LowerBound}). 

\sloppy We now state our first main result. The proofs of Proposition~\ref{prop:Characterization} and Theorem~\ref{thm:MainTheorem} are deferred to Section~\ref{sec:Proof}.
\begin{theorem}
\label{thm:MainTheorem}
Let $n\geq 4$. There are only finitely many Erd\H{o}s matrices in $\Bcal_n$.  More precisely, 
\[\Big|\{A\in \Bcal_n: \|A\|_{\operatorname{F}}^2=\maxtr{A}\}\Big|\leq \sum_{j=1}^{(n-1)^2+1}\binom{n!}{j}\;.\]
\end{theorem}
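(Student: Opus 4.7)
The strategy is to combine Proposition~\ref{prop:Characterization} with Carath\'eodory's theorem applied to the Birkhoff polytope. The rough plan is: each Erd\H{o}s matrix will be identified with an affinely independent subset of $\Pcal_n$ of controlled size, and each such subset will support at most one Erd\H{o}s matrix.

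I would begin with the observation that every permutation matrix appearing with positive coefficient in any Birkhoff--von Neumann decomposition of an Erd\H{o}s matrix $A$ achieves the maximum trace. Indeed, if $A=\sum_i y_iQ_i$ with $y_i>0$, $\sum y_i=1$, $Q_i\in\Pcal_n$, then
\[\|A\|_{\operatorname{F}}^2 = \langle A,A\rangle_{\operatorname{F}}=\sum_i y_i\langle A,Q_i\rangle_{\operatorname{F}} \leq \maxtr{A} = \|A\|_{\operatorname{F}}^2,\]
forcing $\langle A,Q_i\rangle_{\operatorname{F}}=\maxtr{A}$ for each $i$. Next, since $\Bcal_n$ is a polytope of affine dimension $(n-1)^2$, Carath\'eodory's theorem furnishes a decomposition $A=\sum_{i=1}^m x_iP_i$ with $P_1,\ldots,P_m\in\Pcal_n$ affinely independent, $x_i>0$, $\sum x_i=1$, and $m\leq (n-1)^2+1$.

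The heart of the proof is the following uniqueness claim: for any fixed affinely independent $\{P_1,\ldots,P_m\}\subseteq\Pcal_n$, there is at most one $\mathbf{x}\in\R^m$ satisfying Proposition~\ref{prop:Characterization} together with $\sum x_i=1$. This reduces to showing that the associated homogeneous linear system (the $m-1$ equations $(M\mathbf{y})_1=\cdots=(M\mathbf{y})_m$ together with $\sum y_i=0$, a total of $m$ equations in $m$ unknowns) has only the trivial solution. The key identity is that the Gram matrix $M$ satisfies $\mathbf{y}^TM\mathbf{y}=\bigl\|\sum_i y_iP_i\bigr\|_{\operatorname{F}}^2$. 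Thus if $M\mathbf{y}=c'\mathbbm{1}_m$ and $\langle\mathbbm{1}_m,\mathbf{y}\rangle=0$, then
\[\Bigl\|\sum_{i=1}^m y_iP_i\Bigr\|_{\operatorname{F}}^2 = \mathbf{y}^TM\mathbf{y} = c'\langle\mathbbm{1}_m,\mathbf{y}\rangle=0,\]
so $\sum y_iP_i=0$. Combined with $\sum y_i=0$, this is an affine dependence among $\{P_i\}$, forcing $\mathbf{y}=0$.

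Putting the pieces together: every Erd\H{o}s matrix $A$ determines (via an affinely independent Carath\'eodory decomposition) a subset $S\subseteq\Pcal_n$ with $|S|\leq(n-1)^2+1$, and the uniqueness claim shows that each such $S$ is determined by at most one Erd\H{o}s matrix. Counting subsets of $\Pcal_n$ of size between $1$ and $(n-1)^2+1$ gives the asserted bound $\sum_{j=1}^{(n-1)^2+1}\binom{n!}{j}$. The main obstacle is the uniqueness step, where the decisive insight is the identification of the bilinear form $\mathbf{y}^TM\mathbf{y}$ with $\|\sum y_iP_i\|_{\operatorname{F}}^2$; this is what converts a purely algebraic kernel computation into the geometric statement about affine dependencies among permutation matrices.
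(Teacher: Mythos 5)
Your proposal is correct and follows essentially the same route as the paper: an affinely independent Carath\'eodory decomposition of size at most $(n-1)^2+1$, plus uniqueness of the coefficient vector via the identity $\mathbf{y}^{T}M\mathbf{y}=\bigl\|\sum_i y_iP_i\bigr\|_{\operatorname{F}}^2$ and affine independence, followed by the same subset count. The only (harmless) variation is that you absorb the possibly different constants $\langle M\mathbf{x},\mathbf{x}\rangle$ of two solutions into the difference $M\mathbf{y}=c'\mathbbm{1}_m$ and kill $c'\langle\mathbbm{1}_m,\mathbf{y}\rangle$ directly, whereas the paper's Lemma~\ref{lem:Uniqueness} assumes $Mu=Mv$ outright (which there requires the implicit remark that the two constants coincide by symmetry of $M$).
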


\begin{remark}
\remlab{UpperBoundRemark}
Following our arguments in Section~\ref{sec:Dim3}, we can obtain 
\[\Big|\{A\in \Bcal_n: \|A\|_{\operatorname{F}}^2=\maxtr{A}\}/\sim\Big|\leq \sum_{j=0}^{(n-1)^2}\binom{n!-1}{j}\;.\]
The upper bounds in Theorem~\ref{thm:MainTheorem} as well as in this remark are far from sharp and can be improved. It would be interesting to determine an asymptotic for the quantity $\big|\{A\in \Bcal_n: \|A\|_{\operatorname{F}}^2=\maxtr{A}\}\big|$. This requires some delicate combinatorics that we do not pursue in this paper. 
\end{remark}
 
Using the insights from the proof of Theorem~\ref{thm:MainTheorem} and the computations in Section~\ref{sec:Dim3}, we prove the refinement of Proposition~\ref{prop:Characterization}.
\begin{proposition}
\label{prop:CompleteChar}
    Let $\{P_1, \ldots, P_m\}\subseteq \Pcal_n$ be a linearly independent collection of permutation matrices. Let $M\in R^{m\times m}$ be the positive definite matrix such that $M_{i, j}=\langle P_i, P_j\rangle_{\operatorname{F}}$. Set
    \[ {\bf x} = \frac{M^{-1}\mathbbm{1}_m}{\langle \mathbbm{1}_m, M^{-1}\mathbbm{1}_m\rangle}\;.\]
    If $x_i\geq 0$ for all $i\in [m]$, then $\sum_{i=1}^{m}x_iP_i$ is a bistochastic matrix and every Erd\H{o}s matrix is of this form. 
\end{proposition}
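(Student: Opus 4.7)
My plan is to prove the two assertions in the statement in turn. For the bistochasticity claim, a direct computation suffices: pairing $\mathbf{x} = M^{-1}\mathbbm{1}_m / \langle \mathbbm{1}_m, M^{-1}\mathbbm{1}_m\rangle$ with $\mathbbm{1}_m$ yields $\sum_{i=1}^{m} x_i = 1$, so under the non-negativity hypothesis the matrix $A \coloneqq \sum_i x_i P_i$ is a convex combination of permutation matrices and hence lies in $\Bcal_n$.

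The substantive content is the second claim: every Erd\H{o}s matrix $A$ admits a representation of this form. Starting from the Birkhoff--von Neumann decomposition $A = \sum_{i=1}^{N} y_i Q_i$ with distinct $Q_i \in \Pcal_n$, each $y_i > 0$, and $\sum y_i = 1$, I would prune the support to a linearly independent sub-collection by a Carath\'eodory-type argument. The key observation is that any linear relation $\sum_i c_i Q_i = 0$ among permutation matrices forces $\sum_i c_i = 0$: summing all entries on both sides and using that each $Q_i$ has entry-sum equal to $n$ gives $n \sum_i c_i = 0$. Consequently, the perturbation $y_i \mapsto y_i + t c_i$ preserves both the identity $A = \sum (y_i + tc_i) Q_i$ and the probability constraint $\sum (y_i + tc_i) = 1$. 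Choosing the critical $t$ at which one coefficient first vanishes while the others remain non-negative removes at least one term, keeping all remaining coefficients strictly positive. Iterating produces $A = \sum_{i=1}^{m} x_i P_i$ with $\{P_1,\dots,P_m\}$ linearly independent and every $x_i > 0$.

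Now Proposition~\ref{prop:Characterization} applies and yields $M\mathbf{x} = \langle M\mathbf{x}, \mathbf{x}\rangle \mathbbm{1}_m$. The Gram matrix $M$ is positive definite (hence invertible) by linear independence of the $P_i$, and expanding $\|A\|_{\operatorname{F}}^2 = \sum_{i,j} x_i x_j \langle P_i, P_j\rangle_{\operatorname{F}}$ identifies $\langle M\mathbf{x}, \mathbf{x}\rangle = \|A\|_{\operatorname{F}}^2$. So $\mathbf{x} = \|A\|_{\operatorname{F}}^2 \cdot M^{-1}\mathbbm{1}_m$, and pairing with $\mathbbm{1}_m$ together with $\langle \mathbbm{1}_m, \mathbf{x}\rangle = 1$ pins down the scalar as $\|A\|_{\operatorname{F}}^2 = 1 / \langle \mathbbm{1}_m, M^{-1}\mathbbm{1}_m\rangle$, giving exactly the formula in the statement.

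The only delicate step I anticipate is the Carath\'eodory reduction: one must verify that the pruning preserves \emph{both} strict positivity of the coefficients and the probability normalization, since Proposition~\ref{prop:Characterization} is only available under $x_i > 0$. Everything rests on the identity $\sum_i c_i = 0$ for any linear relation among permutation matrices; once this is secured, the remainder is a clean invocation of the earlier proposition followed by routine linear algebra.
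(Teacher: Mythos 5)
Your proof is correct, and it reaches the formula by the same final step as the paper (apply Proposition~\ref{prop:Characterization}, use positive definiteness of the Gram matrix $M$, invert, and normalize via $\langle \mathbbm{1}_m,{\bf x}\rangle=1$), but the structural reduction is handled differently. The paper first invokes its Carath\'eodory-type Lemma~\ref{lemma:redAffine} to write an Erd\H{o}s matrix as a positive convex combination of an \emph{affinely} independent family, and then needs a separate proposition in Section~\ref{sec:refinement} (if the family is affinely but not linearly independent, the solution of $M{\bf x}=\langle M{\bf x},{\bf x}\rangle\mathbbm{1}_m$ has a zero coordinate) to conclude, via Corollary~\ref{cor:ErdosStructure}, that one may take the family \emph{linearly} independent. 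You instead prune the Birkhoff--von Neumann decomposition directly to a linearly independent support, using the observation that any relation $\sum_i c_iQ_i=0$ among permutation matrices forces $\sum_i c_i=0$ (sum all entries; each $Q_i$ has total entry sum $n$), so the pruning preserves both $A$ and the normalization $\sum_i x_i=1$. This observation is the real dividend of your route: it shows that for permutation matrices affine independence and linear independence coincide, so the paper's intermediate case distinction is not needed (indeed it is vacuous in this setting), and the reduction to Corollary~\ref{cor:ErdosStructure} becomes a one-step Carath\'eodory argument. The paper's route, on the other hand, keeps the argument within the general framework of Lemma~\ref{lemma:redAffine} and Lemma~\ref{lem:Uniqueness}, which it reuses elsewhere. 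Two small points to make explicit in your write-up: since $\sum_i c_i=0$ and not all $c_i$ vanish, there is at least one negative coefficient, so the critical value of $t$ in the pruning is finite; and at that critical value several coefficients may vanish simultaneously, all of which are discarded, so the surviving coefficients are strictly positive, as required for Proposition~\ref{prop:Characterization}.
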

Finally, we state the second main result of this paper answering~\cite[Question 3]{bouthat2024question}.
\begin{theorem}
\label{thm:RationalEntries}
    Every Erd\H{o}s matrix $A\in \Bcal_n$ has only rational entries. 
\end{theorem}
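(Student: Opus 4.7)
The plan is to derive Theorem~\ref{thm:RationalEntries} as an essentially immediate consequence of the refined characterization in Proposition~\ref{prop:CompleteChar}. That proposition tells us that every Erd\H{o}s matrix $A\in \Bcal_n$ can be written as $A=\sum_{i=1}^{m} x_i P_i$ for some linearly independent collection $\{P_1,\ldots,P_m\}\subseteq \Pcal_n$ of permutation matrices, with the coefficient vector determined by the explicit formula
\[
{\bf x} \;=\; \frac{M^{-1}\mathbbm{1}_m}{\langle \mathbbm{1}_m,\, M^{-1}\mathbbm{1}_m\rangle},
\]
where $M_{i,j}=\langle P_i,P_j\rangle_{\operatorname{F}}$. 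Since the $P_i$ have $0/1$ entries, it suffices to show that each coordinate of ${\bf x}$ is rational.

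The crucial observation is that the Gram matrix $M$ has \emph{integer} entries. Indeed, if $\sigma_i$ denotes the permutation associated with $P_i$, then $M_{i,j}=\mathrm{Tr}(P_iP_j^{T})$ counts the indices $k\in[n]$ at which $\sigma_i$ and $\sigma_j$ agree, which is a non-negative integer. Linear independence of the $P_i$ makes $M$ positive definite, so $\det(M)$ is a strictly positive integer and Cramer's rule yields $M^{-1}\in \mathbb{Q}^{m\times m}$. Consequently $M^{-1}\mathbbm{1}_m\in \mathbb{Q}^m$, and the scalar $\langle \mathbbm{1}_m,\, M^{-1}\mathbbm{1}_m\rangle$ is a strictly positive rational number, since $M^{-1}$ is also positive definite and $\mathbbm{1}_m\neq 0$. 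Hence ${\bf x}\in \mathbb{Q}^m$, and expanding $A=\sum_{i=1}^m x_iP_i$ exhibits each entry of $A$ as a rational combination of $0$s and $1$s.

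The only real work is establishing Proposition~\ref{prop:CompleteChar}; once that refined characterization is in hand, Theorem~\ref{thm:RationalEntries} reduces to the short linear-algebra argument sketched above. The main obstacle therefore lies not in this theorem itself but in verifying that the formula for ${\bf x}$ genuinely captures every Erd\H{o}s matrix — in particular, in ensuring one can always select a \emph{linearly independent} subfamily of permutation matrices whose convex hull contains $A$ with the prescribed coefficients. Given the finiteness result of Theorem~\ref{thm:MainTheorem} and its proof, this selection should be available, and the rationality conclusion then follows purely from the arithmetic of the Gram matrix.
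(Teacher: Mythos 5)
Your argument is correct and matches the paper's own proof essentially verbatim: both deduce the result from the refined characterization (Proposition~\ref{prop:CompleteChar}, equivalently Corollary~\ref{cor:ErdosStructure}) together with the explicit formula ${\bf x}=M^{-1}\mathbbm{1}_m/\langle \mathbbm{1}_m, M^{-1}\mathbbm{1}_m\rangle$, and then use the fact that the Gram matrix $M$ has non-negative integer entries so that $M^{-1}$, and hence ${\bf x}$ and the entries of $A$, are rational. Your added remarks on positive definiteness and Cramer's rule only make explicit what the paper leaves implicit, so there is nothing to correct.
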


\subsection*{Outline of the paper}
In Section~\ref{sec:AlphaErdos}, we introduce a natural generalization of Erd\H{o}s matrices and propose some questions. This section can be skipped without affecting the understanding of later sections. 
We prove Proposition~\ref{prop:Characterization} and Theorem~\ref{thm:MainTheorem} in Section~\ref{sec:Proof}. In Section~\ref{sec:Dim3}, we rederive all $3\times 3$ Erd\H{o}s matrices using the insights from our proof technique. Building on the computations in Section~\ref{sec:Dim3}, we complete the proof of Proposition~\ref{prop:CompleteChar} and Theorem~\ref{thm:RationalEntries} in Section~\ref{sec:refinement}.

\section{\texorpdfstring{$\alpha$-Erd\H{o}s matrices}{alpha-Erd\H{o}s matrices}}
\label{sec:AlphaErdos}
In this section, we propose a natural generalization of the Erd\H{o}s matrices (namely $\alpha$-Erd\H{o}s matrices for a suitable range of $\alpha$) and propose several problems that may be of independent interest as well as may be useful in understanding the Erd\H{o}s matrices. 

To describe the generalization, we begin with some notations and terminology.  Recall that $\Bcal_n$ denotes the set of all $n\times n$ bistochastic matrices.  We define the function $\Delta_n:\Bcal_n\to \R$ by 
\[ \Delta_n(A) = \max_{\sigma\in S_n} A_{i, \sigma(i)}-\|A\|_{\operatorname{F}}^2\;.\]
Observe that $\Delta_n$ is invariant under the pre- and post-multiplication by a permutation matrix, that is, $\Delta_n(A)=\Delta_n(P_1AP_2)$ where $P_1$ and $P_2$ are $n\times n$ permutation matrices. In other words, $\Delta_n(A)=\Delta_n(B)$ if $A\sim B$. The Marcus--Ree inequality~\Eqref{MarcusRee} is equivalent to $\Delta_n\geq 0$ on $\Bcal_n$.  And, characterizing Erd\H{o}s matrices (up to equivalence) amounts to characterizing the zero-set of $\Delta_n$, that is, $\{A\in \Bcal_n: \Delta_n(A)=0\}$ (or $\{A\in \Bcal_n: \Delta_n(A)=0\}/\sim$). In a forthcoming work, Ottolini and Tripathi~\cite{ottolini2024} study the properties of $\Delta_n(A)$ where $A$ is a random bistochastic matrix drawn according to some probability measure on the set $\Bcal_n$. The following observation was made in~\cite{ottolini2024}.

\begin{proposition}
\label{prop:MaxDiscrepency}
For each $n\geq 1$, we have    \[\max_{A\in \mathcal{B}_n} \Delta_n(A)=(n-1)/4\;.\]
Moreover, the maximum of $\Delta_n$ on $\Bcal_n$ is achieved by the unique (up to the equivalence) matrix  $M_n\coloneqq \frac{1}{2}I_n+\frac{1}{2}J_n$. 
\end{proposition}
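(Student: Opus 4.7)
The plan is to establish the two halves of Proposition~\ref{prop:MaxDiscrepency} separately: the value $(n-1)/4$ is attained at $M_n$ by a direct calculation, while the matching upper bound follows from a one-dimensional optimization obtained by applying Cauchy--Schwarz to each row of $A$.

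First I would verify the lower bound by hand. The matrix $M_n = \tfrac{1}{2}I_n + \tfrac{1}{2}J_n$ has diagonal entries $d := \tfrac{n+1}{2n}$ and off-diagonal entries $\tfrac{1}{2n}$. Since $d > \tfrac{1}{2n}$, the identity permutation maximizes $\sum_i (M_n)_{i,\sigma(i)}$, giving $\maxtr{M_n} = \tfrac{n+1}{2}$. A direct computation yields $\|M_n\|_{\operatorname{F}}^2 = n d^2 + n(n-1)\cdot \tfrac{1}{4n^2} = \tfrac{n+3}{4}$, so $\Delta_n(M_n) = \tfrac{n-1}{4}$.

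For the upper bound I would use the equivalence-invariance of $\Delta_n$ to reduce to the case where the identity is optimal: replacing $A$ by $A P_{\sigma^\ast}^T$ for an optimal $\sigma^\ast$, we may assume $\maxtr{A} = \operatorname{tr}(A) =: t$. Setting $d_i := A_{i,i}$, the off-diagonal entries of row $i$ are non-negative with sum $1 - d_i$, so Cauchy--Schwarz gives
\[
\sum_{j \ne i} A_{i,j}^2 \;\ge\; \frac{(1-d_i)^2}{n-1}, \qquad \text{whence} \qquad \|A\|_{\operatorname{F}}^2 \;\ge\; \sum_{i=1}^n d_i^2 + \frac{1}{n-1}\sum_{i=1}^n (1-d_i)^2.
\]
The right-hand side is convex and symmetric in $(d_1, \ldots, d_n)$, so subject to $\sum_i d_i = t$ it is minimized at $d_i \equiv t/n$, yielding $\|A\|_{\operatorname{F}}^2 \ge t^2/n + (n-t)^2/[n(n-1)]$. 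Thus $\Delta_n(A) \le g(t)$ where $g(t) := t - t^2/n - (n-t)^2/[n(n-1)]$; a routine one-variable optimization shows $g$ is maximized on $[0,n]$ at $t=(n+1)/2$ with value $(n-1)/4$.

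For uniqueness, tracing through the equality cases pins everything down: the calculus step forces $t = (n+1)/2$, the convexity step forces $d_i \equiv (n+1)/(2n)$, and Cauchy--Schwarz equality in each row forces all off-diagonal entries of row $i$ to equal $(1-d_i)/(n-1) = 1/(2n)$, which exactly gives $A = M_n$. The only real subtlety I would flag is that my lower bound on $\|A\|_{\operatorname{F}}^2$ used only the row-sum constraints, so one must confirm that the unique extremal configuration still satisfies the column-sum constraints --- which is immediate, since every column of the extremal matrix sums to $(n+1)/(2n) + (n-1)\cdot 1/(2n) = 1$, so the relaxation is tight inside $\Bcal_n$.
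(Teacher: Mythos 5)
Your proposal is correct and follows essentially the same route as the paper: both reduce to a row-by-row application of Cauchy--Schwarz to the off-diagonal (off-$\sigma(i)$) entries, followed by a one-variable quadratic maximization whose equality case forces the diagonal entries to be $\tfrac{1}{2}+\tfrac{1}{2n}$ and all remaining entries to be $\tfrac{1}{2n}$, i.e.\ $A\sim M_n$. The only cosmetic difference is that the paper bounds each row's contribution $R_i$ by $\tfrac{n-1}{4n}$ separately, whereas you first couple the rows through $t=\operatorname{tr}(A)$ via convexity and then optimize over $t$; the key inequality and equality analysis are identical.
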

\begin{proof}
Let $A\in \Bcal_n$ and $\sigma$ be the permutation that achieves the maximum in $\Delta_n(A)$. Then, $\Delta_n(A) = \sum_{i=1}^{n} R_i$ where
\begin{align*}
    R_i=\left(A_{i, \sigma(i)} - \sum_{j=1}^{n}A_{i, j}^2\right)\;.
\end{align*}
It suffices to show that $R_i\leq \frac{n-1}{4n}$ for each $1\leq i\leq n$.  To this end, note that (after some relabelling) 
\[R_i = x_1 -x_1^2 -\sum_{i=2}^{n}x_i^2 \leq x_1-x_2^2-\frac{(1-x_1)^2}{n-1},\]
for some $x_i\geq 0$ such that $\sum_{i=2}^{n}x_i=1-x_1$. The last inequality uses  $\sum_{i=^2}^{n}x_i^2\geq (1-x)^2/(n-1)$ which follows from AM-GM inequality.  It follows that
\[ R_i\leq \max_{x\in [0, 1]}\left(x - x^2-\frac{(1-x)^2}{n-1}\right)\;.\]
A simple calculation shows that $R_i\leq \frac{n-1}{4n}$ and that  the maximum is achieved precisely when $x=\frac{1}{2}+\frac{1}{2n}$ and $x_i=\frac{1}{2n}$ for $2\leq i\leq n$.  Since $\Delta_n(A)=\frac{n-1}{4}$ if and only if $R_i=\frac{n-1}{4n}$ for each $i\in [n]$, we conclude that $\Delta_n(A)=\frac{n-1}{4}$ precisely when
\begin{align*}
    A_{i, \sigma(i)} &= \frac{1}{2}+\frac{1}{2n},\quad A_{i, j} = \frac{1}{2n} \quad \forall i, j\in [n], j\neq \sigma(i)\;.
\end{align*}
This completes the proof.
\end{proof}
Since $\Delta_n$ is a continuous function, it follows that $\Delta_n(\Bcal_n)=[0, (n-1)/4]$.  It naturally raises the following generalization of the Erd\H{o}s's question.
\begin{question}
    Let $n\geq 2$ and $\alpha\in [0, (n-1)/4]$.  Characterize the set \[\Bcal_{n, \alpha}\coloneqq \{A\in \Bcal_n: \Delta_n(A)=\alpha\}\] 
 (up to the equivalence). We refer to a matrix $A\in \Omega_{n, \alpha}$ as an $\alpha$-Erd\H{o}s matrix.
\end{question}
As very little is known about this problem, we summarize the progress so far on this problem for the reader's convenience.  The $n=2$ case is trivial. We fully understand the set $\Omega_{3, 0}$ thanks to~\cite{bouthat2024question} and $\Omega_{n, (n-1)/4}$ due to~\cite{ottolini2024}. And, essentially this is all that is known currently. We close this section with the complete solution of $\alpha$-Erd\H{o}s matrix in dimension $n=2$. The following computations are easy exercises, but we include them for completeness.

\subsection{The \texorpdfstring{$n=2$}{n=2} case}
\begin{paragraph}{I.}
\label{par:twobytwo}
A $2\times 2$ bistochastic matrix $A$ looks like $\begin{pmatrix}
p & 1-p \\ 1-p & p
\end{pmatrix}$ for some $p\in [0, 1]$.  In particular, $A$ is an Erd\H{o}s matrix precisely when 
\[ 2(p^2+ (1-p)^2) = \max\{2p, 2(1-p)\}\;.\]
This yields either $p=0, \frac{1}{2}$ or $1$.  In other words, the Marcus--Ree inequality is saturated by the following three matrices:
\[
A_0\coloneqq \begin{pmatrix}
1 & 0 \\ 0 & 1
\end{pmatrix}\,\quad A_{\frac{1}{2}}\coloneqq \frac{1}{2}\begin{pmatrix}
1 & 1 \\ 1 & 1
\end{pmatrix}, \;\quad \text{and}\quad A_{1}\coloneqq \begin{pmatrix}
0 & 1\\ 1 & 0
\end{pmatrix}\;.\]
Since $A_0\sim A_1$, it follows that there are exactly $2$ matrices in $\Bcal_2$ (up to pre- and post-multiplication by permutation matrices) that are Erd\H{o}s matrices.
\end{paragraph}
\begin{paragraph}{II.}
More generally, in this case, one can characterize $\Omega_{2, \alpha}$ for any $\alpha\in [0, 1/4]$.  For completeness, we record it here. Notice that $A=\begin{pmatrix}
    p & 1-p \\ 1-p & p
\end{pmatrix}\in \Omega_{2, \alpha}$ precisely when $2(p^2+ (1-p)^2) - 2\max\{p, (1-p)\}+\alpha=0$. It is easily verified that the solution is given by 
\[p\in \left\{\frac{1\pm \sqrt{1-4\alpha}}{4}\;,\frac{3\pm \sqrt{1-4\alpha}}{4}\right\}\;.\] 
In other words, $|\Omega_{2, \alpha}/\sim|=2$ for all $\alpha\in [0, 1/4)$ and $|\Omega_{2, 1/4}/\sim|=1$.
\end{paragraph}
\begin{remark}
\remlab{alphaErdos}
Note that $2\times 2$ bistochastic matrices have only one free variable and for given $\alpha\in [0, 1/4]$, the problem of finding an $\alpha$-Erd\H{o}s matrix reduces to solving a quadratic equation which can have at most two solutions. 
A $3\times 3$ bistochastic matrix is determined by $4$ free variables. The condition that $\|A\|_{\operatorname{F}}^2=\maxtr{A}-\alpha$, in general, will determine a region in $3$-dimension space. It will be interesting, therefore, to determine if there are only finitely many $\alpha$-Erd\H{o}s matrices in dimension $n=3$. An approach similar to~\cite{bouthat2024question} may be useful here.
\end{remark}

\section{Proofs}
\label{sec:Proof}
In this section, we prove Proposition~\ref{prop:Characterization} and Theorem~\ref{thm:MainTheorem}. 
\begin{proofof}{Proposition~\ref{prop:Characterization}:} 
Let $A\in \Bcal_n$ be given by 
\[A = \sum_{i=1}^{m}x_iP_i,\]
where $\sum_{i=1}^{m}x_i=1$ and $x_i>0$ for all $i\in [m]$. Observe that 
\begin{equation}
\eqlab{NormExpansion}
     \|A\|_{\operatorname{F}}^2 = \sum_{i=1}^{m}x_i\langle A, P_i\rangle_{\operatorname{F}}=\sum_{i, j=1}^{m}x_ix_j\langle P_i, P_j\rangle_{\operatorname{F}}.
\end{equation}
Since $\langle A, P_i\rangle_{\operatorname{F}}\leq\maxtr{A}$ for each $i\in [m]$, it follows that $A$ is an Erd\H{o}s matrix if and only if 
\begin{equation}
\eqlab{KeyEquality}
    \|A\|_{\operatorname{F}}^2=\langle A, P_i\rangle_{\operatorname{F}} = \maxtr{A},
\end{equation}
for all $i\in [m]$.
Let $M$ be the symmetric $m\times m$ matrix such that $M_{i, j}= \langle P_i, P_j\rangle_{\operatorname{F}}$ and let ${\bf x} = (x_1, \ldots, x_m)^{T}\in \R^m$. Observe that $\|A\|_{\operatorname{F}}^2 = \langle M{\bf x}, {\bf x}\rangle$. Combining~\Eqref{NormExpansion} and~\Eqref{KeyEquality}, we obtain that if $A$ is an Erd\H{o}s matrix then 
\begin{equation}
\eqlab{FinalEquation}
M{\bf x}= \langle M{\bf x}, {\bf x}\rangle\mathbbm{1}_m\;, 
\end{equation}
where $\mathbbm{1}_m\in \R^m$ is the vector all whose entries are $1$. 
\end{proofof}
The proof of Theorem~\ref{thm:MainTheorem} builds on Proposition~\ref{prop:Characterization} and some lemmas that we prove below. The main idea in the proof of Theorem~\ref{thm:MainTheorem} is that~\Eqref{FinalEquation} has at most one solution if the collection $\{P_1, \ldots, P_m\}$ is \emph{affinely independent} (See Definition~\ref{def:AffInd}). Of course, we also need to prove that every $A\in \Bcal_n$ can be written as a convex combination of an affinely independent collection $\{P_1, \ldots, P_m\}$ of permutation matrices. This is done in~\lemref{redAffine}. \newline

\noindent We now state the following definition (See~\cites[Section 3.5.3]{leonard2015geometry}).
\begin{definition}[Affine independence]
\label{def:AffInd}
A finite collection of vectors $\{x_1, \ldots, x_m\}$ in some Hilbert space $\Hcal$ is said to be \emph{affinely dependent}, if there exists real numbers $c_1, \ldots, c_m$, not all zeros, such that  \[\sum_{i=0}^{m}c_ix_i=0\;,\]
and $\sum_{i=1}^{m}c_i=0$. The set of vectors $\{x_1, \ldots, x_m\}$ is said to be \emph{affinely independent} if it is not affinely dependent. 
\end{definition}

\begin{lemma}
\lemlab{redAffine}
Let $X\in \R^{d}$ be a non-empty convex subset and let $\Ecal$ be the set of extreme points of $X$. Then,  Every $x\in X$ can be written as a convex combination $x=\sum_{i=1}^{m}c_ie_i$ such that $\{e_1, \ldots, e_m\}\subseteq \Ecal$ is a collection of affinely independent vectors and $c_i>0$ for all $i\in [m]$. 
\end{lemma}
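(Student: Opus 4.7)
The plan is to reduce to a minimal convex representation and then extract affine independence from the Carathéodory-style reduction. Concretely, I would first invoke the hypothesis that $X$ is the convex hull of its extreme points (which in the intended application, $X=\Bcal_n$ and $\Ecal=\Pcal_n$, is the content of the Birkhoff--von Neumann theorem) to get at least one representation $x=\sum_{i=1}^{m}c_ie_i$ with $e_i\in\Ecal$, $c_i>0$, and $\sum_i c_i=1$. Among all such representations of $x$ using distinct extreme points of $X$, choose one for which $m$ is as small as possible. This minimality will be the lever that forces affine independence.

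Next I would argue by contradiction. Suppose $\{e_1,\ldots,e_m\}$ is affinely dependent, so there exist scalars $\lambda_1,\ldots,\lambda_m$, not all zero, with $\sum_{i=1}^{m}\lambda_i e_i=0$ and $\sum_{i=1}^{m}\lambda_i=0$. For any $t\in\R$, set $c_i(t)=c_i+t\lambda_i$. Because $\sum\lambda_i e_i=0$ and $\sum\lambda_i=0$, we have
\[
\sum_{i=1}^{m}c_i(t)\,e_i = x \qquad\text{and}\qquad \sum_{i=1}^{m}c_i(t)=1,
\]
for every $t$. Since not all $\lambda_i$ vanish and $\sum\lambda_i=0$, there is at least one positive and at least one negative $\lambda_i$. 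Hence the interval of $t$ for which every $c_i(t)\geq 0$ is a nondegenerate closed interval $[t_-,t_+]$ with finite endpoints, and at $t=t_+$ (say) at least one coordinate $c_{i_0}(t_+)$ is exactly zero while all others remain nonnegative. Dropping the vanishing terms produces a strictly shorter convex combination of distinct extreme points that still equals $x$, contradicting the minimality of $m$.

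The remaining careful point is to confirm that each $e_i$ used is genuinely an extreme point, which is immediate from the choice of the initial representation and the fact that the reduction only removes terms. In the applications of this lemma in the paper, $X=\Bcal_n$ is compact and $\Ecal=\Pcal_n$ is finite, so the existence of an initial convex representation and the minimization over $m$ are entirely unproblematic; the argument above is then just the classical Carathéodory reduction.

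The main (and only) obstacle is conceptual rather than technical: one has to notice that affine dependence---not merely linear dependence---is exactly the condition that lets us perturb the coefficients $c_i$ along $\lambda_i$ while preserving \emph{both} the value $\sum c_i e_i=x$ \emph{and} the normalization $\sum c_i=1$. Once this is observed, the reduction to a shorter convex combination is routine, and minimality closes the argument.
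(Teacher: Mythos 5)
Your proposal is correct and follows essentially the same route as the paper's proof: start from an existing convex representation by extreme points, take one minimal in the number of (nonzero) terms, and use an affine dependence to perturb the coefficients until one vanishes while preserving both the sum $x$ and the normalization, contradicting minimality. The paper's choice of $\alpha=\max\{t: t\,|\beta_i|\leq c_i\ \forall i\}$ is just your endpoint $t_+$ in different clothing, so there is no substantive difference.
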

\begin{proof}
	The proof follows by fairly standard arguments but we include it for completeness.  A well-known result due to Carath\'eodory in convex geometry~\cites[Theorem 3.3.10]{leonard2015geometry} states that every $x\in X$ can be written as a convex combination of at most $d+1$ extremal points. That is, $x=\sum_{i=1}^{d+1}c_ie_i$ where $\{e_1, \ldots, e_{d+1}\}\subseteq \Ecal$ and $c_i\geq 0$ for all $i\in [m]$ and $\sum_{i=1}^{d+1}c_i=1$.  

    Fix $x\in X$ and let $x=\sum_{i=1}^{m}c_ie_i$ be a minimal representation of $x$ with respect to the number of non-zero coefficients $c_i$. We claim that $\{e_1, \ldots, e_m\}$ is affinely independent. 
	If not, then there exists $0\neq \beta= (\beta_1, \ldots, \beta_m)^{t}\in \R^m$ such that $\sum_{i=1}^{m}\beta_i=0$ and $\sum_{i=1}^{m}\beta_ie_i=0$.  Let 
	\[\alpha\coloneqq \max\{t: t\;|\beta_i|\leq c_i\;\; \forall i\in [m]\}\;.\]
	Notice that there exists some $i_0\in [m]$ such that $c_{i_0}+\alpha\beta_{i_0}=0$ and $(c_i+\alpha\beta_i)\geq 0$ for all $i\in [m]$ by construction. Further observe that $x= \sum_{i=1}^{m}(c_i+\alpha\beta_i)e_i$, but this contradicts the fact that $x=\sum_{i=1}^{m}c_ie_i$ was a minimal representation (with respect to the number of non-zero coefficients). 
\end{proof}
It follows from~\lemref{redAffine} that every bistochastic matrix $A\in \Bcal_n$ can be written as a convex combination of a collection of permutation matrices $\{P_1, \ldots, P_m\}$ that is affinely independent. We now show that if ${P_1, \ldots, P_m}$ is an affinely independent collection of permutation matrices, then~\Eqref{FinalEquation} has at most one solution. More generally, we make the following observation.

\begin{lemma}
\label{lem:Uniqueness}
Let $\{f_1, \ldots, f_m\}$ be a collection of affinely independent vectors in some Hilbert space $\mathcal{H}$ with the inner-product $\langle \cdot, \cdot\rangle_{\Hcal}$.  Let $M$ be the $m\times m$ matrix such that $M_{i,j}= \langle f_i, f_j\rangle_{\Hcal
}$.  Let $u, v\in \R^{m}$ be such that $Mu= Mv$. If $\sum_{i=1}^{m}u_i = \sum_{i=1}^{m}v_i$ then $u=v$. 
\end{lemma}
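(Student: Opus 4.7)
The plan is to reduce the statement to the definition of affine independence by looking at the difference $w = u - v$, and then to use the Gram matrix identity to show that $w$ has to represent a nontrivial affine dependence among the $f_i$, which is a contradiction unless $w=0$.

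More concretely, I would first set $w = u - v \in \R^m$. The hypotheses immediately give $Mw = 0$ and $\sum_{i=1}^m w_i = 0$, so it suffices to show that $w = 0$. Introduce the vector $g \coloneqq \sum_{j=1}^m w_j f_j \in \mathcal{H}$. The condition $Mw = 0$ means that for every $i \in [m]$,
\[
0 = (Mw)_i = \sum_{j=1}^m w_j \langle f_i, f_j\rangle_{\Hcal} = \Bigl\langle f_i, \sum_{j=1}^m w_j f_j\Bigr\rangle_{\Hcal} = \langle f_i, g\rangle_{\Hcal}.
\]
In other words, $g$ is orthogonal to each $f_i$.

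The next step is the key trick: since $g$ is a linear combination of the $f_i$'s with coefficients $w_i$, orthogonality of $g$ to every $f_i$ forces $g$ to be orthogonal to itself. Indeed,
\[
\langle g, g\rangle_{\Hcal} = \Bigl\langle \sum_{i=1}^m w_i f_i, g\Bigr\rangle_{\Hcal} = \sum_{i=1}^m w_i \langle f_i, g\rangle_{\Hcal} = 0,
\]
so $g = 0$ in $\Hcal$, i.e.\ $\sum_{j=1}^m w_j f_j = 0$. Together with $\sum_{j=1}^m w_j = 0$, this says exactly that $w$ exhibits an affine dependence among $\{f_1, \ldots, f_m\}$. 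By the affine independence hypothesis, such a $w$ must be zero, and we conclude $u = v$.

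I expect no serious obstacle here; the only subtlety is remembering the right definition, namely that affine independence requires both $\sum w_j f_j = 0$ and $\sum w_j = 0$ to force $w = 0$, and that the condition $\sum u_i = \sum v_i$ in the hypothesis is precisely what supplies the second constraint. The Gram matrix observation that $\langle g, g\rangle$ can be extracted from $\langle f_i, g\rangle = 0$ using the same coefficients $w_i$ is the standard way to pass from $Mw = 0$ to $\sum w_j f_j = 0$ when $\Hcal$ is not finite-dimensional and one cannot simply invoke invertibility of a Gram matrix directly.
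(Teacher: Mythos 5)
Your proof is correct and follows essentially the same route as the paper: set $w=u-v$, use $Mw=0$ together with the Gram structure to deduce $\bigl\langle \sum_i w_i f_i, \sum_j w_j f_j\bigr\rangle_{\Hcal}=0$, hence $\sum_i w_i f_i=0$, and then invoke affine independence with $\sum_i w_i=0$ to conclude $w=0$.
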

\begin{proof}
	Set $w= u-v$. Since $Mw=0$, we obtain that 
	\[\sum_{j=1}^{m}\langle f_i, f_j\rangle_{\Hcal} w_j = 0, \quad \forall i=1, \ldots, m\;.\]
	For any vector $\widetilde{w}\in \R^m$, we obtain 
	\begin{align*}
	0=\sum_{i=1}^{m} \widetilde{w}_i\sum_{j=1}^{m}\big\langle f_i, f_j\big\rangle_{\Hcal} w_j = \Big\langle \sum_{i=1}^{m} \widetilde{w}_if_i, \sum_{j=1}^{m} w_jf_j\Big\rangle_{\Hcal} &=0\;.
	\end{align*}
	Taking $\widetilde{w}=w$, we conclude that $\sum_{i=1}^{m}w_if_i=0$. Since $f_1, \ldots, f_m$ are affinely independent and $\sum_{i=1}^{m}w_i=0$, it follows that $w_i=0$ for all $i=1, \ldots, m$.
\end{proof}
The proof of Theorem~\ref{thm:MainTheorem} follows easily from Proposition~\ref{prop:Characterization} and Lemma~\ref{lem:Uniqueness} but we include it for completeness. 

\subsection{Proof of Theorem~\ref{thm:MainTheorem}}
For an affinely independent collection of permutation matrices $\{P_1, \ldots, P_m\}$, we define
 \[\operatorname{Co}\left(\{P_1, \ldots, P_m\}\right)\coloneqq \left\{\sum_{i=1}^{m}\alpha_iP_i: \alpha_i>0, \;\;\sum_{i=1}^{m}\alpha_i=1\right\}\;.\]
 It follows from~\lemref{redAffine} that every $n\times n$ bistochastic matrix $A\in \operatorname{Co}(\{P_1, \ldots, P_m\})$ for some collection of affinely independent permutation matrices $\{P_1, \ldots, P_m\}$. Furthermore, Lemma~\ref{lem:Uniqueness} shows that for any such collection of affinely independent permutation matrices $\{P_1, \ldots, P_m\}$, there is at most one Erd\H{o}s matrix in $\operatorname{Co}(\{P_1, \ldots, P_m\})$. Since there are only finitely many permutation matrices, we conclude there are only finitely many Erd\H{o}s matrices. 

As $\Bcal_n$ is a convex subset of dimension $(n-1)^2$, it follows Carath\'eodory's theorem~\cite[Theorem 3.3.10]{leonard2015geometry} that a bistochastic matrix in $\Bcal_n$ can be written as a convex combination of at most $(n-1)^2+1$ permutation matrices. Since there are at most $\binom{n!}{m}$ many affinely independent collection of permutation matrices of size $m$, it follows that
 \[\big|\{A\in \Bcal_n: \|A\|_{\operatorname{F}}^2=\maxtr{A}\}\big|\leq \sum_{j=1}^{(n-1)^2+1}\binom{n!}{j}\;.\]


\section{An algorithm for Erd\H{o}s matrices}
\label{sec:algorithm}
Recall that, up to the equivalence, there are only $6$ Erd\H{o}s matrices in $\Bcal_3$ and these are given by $I_3, J_3, I\oplus J_2, S, R$, and $T$.  This was shown in~\cite{bouthat2024question}. Their proof can be broken into three ingredients.  The first ingredient is a result, due to Marcus and Ree, that states that except $J_3$ any Erd\H{o}s matrix must have a zero entry.  Using the equivalence, it suffices to consider the matrices of the form 
\[A\coloneqq \begin{pmatrix}
x & w &  1-x-w \\
0 & y & 1-y \\
1-x & 1-w-y &  x+2w+2y-2
\end{pmatrix}\;.\]

The authors in~\cite{bouthat2024question} characterize the matrices of the above form that satisfy $\|A\|_F^2 = \mathrm{Trace}(A)$.  Since $w$ has to be a real number, this condition determines a feasible region for $(x, y)$ in $\R^2$, and for $(x, y)$ in this region the parameter $w= w(x, y)$ has at most two possible values that are obtained by solving a quadratic equation in $w$.  This is essentially the second ingredient of the proof. The third and final ingredient is as follows. For each of the two choices of the $w\equiv w(x, y)$ over the feasible region, the authors find the part of the feasible region such that $\mathrm{Trace}(A)=\maxtr{A}$. And, curiously, this yields only finitely many values for $(x, y)$. 

In the following, we re-derive the same result using insights from Proposition~\ref{prop:Characterization} and the proof of Theorem~\ref{thm:MainTheorem}. The key idea is very simple. Our proof suggests the following general recipe for generating Erd\H{o}s matrices. 

\begin{enumerate}
\item Enumerate the collection $\mathcal{C}$ of \emph{affinely independent} sets $\{P_1, \ldots, P_m\}$ for $1\leq m\leq (n-1)^2+1$.  
\item Given an affinely independent collection $\{P_1, \ldots, P_m\}\in \mathcal{C}$, construct the $m\times m$ matrix $M$ such that $M_{i, j} = \langle P_i, P_j\rangle_{\operatorname{F}}$. Notice that the matrix $M$ is a Gram matrix and therefore it is positive semidefinite~\cite[Theorem 7.2.10]{horn2012matrix}. 
\item Solve for $M{\bf x}=\langle M{\bf x}, {\bf x}\rangle \mathbbm{1}_m$ and $x_i\geq 0$ for all $i$ and $\langle \mathbbm{1}_m, {\bf x}\rangle=1$, if it exists.
\item Check if the matrix $A = \sum_{i=1}^{m}x_iP_i$ is an Erd\H{o}s matrix. 
\end{enumerate}
\begin{remark}
Notice that $M_{ii}=\langle P_i, P_i\rangle_{\operatorname{F}}=n$ and hence for any vector ${\bf x}\neq 0$ such $x_i\geq 0$, we have that $\langle M{\bf x}, {\bf x}\rangle\geq n\sum_{i=1}^{m}x_i^2>0$. On the other hand, if ${\bf x}$ solves $M{\bf x}=\langle M{\bf x}, {\bf x}\rangle \mathbbm{1}_m$ and $x_i\geq 0$, then $\langle M{\bf x}, {\bf x}\rangle =\langle M{\bf x}, {\bf x}\rangle \langle \mathbbm{1}_m, {\bf x}\rangle$. In particular, if ${\bf x}\neq 0$ and $x_i>0$ then $\langle \mathbbm{1}_m, {\bf x}\rangle=1$. Therefore, the last condition in step 3 above is superfluous. 
\end{remark}

\begin{remark}
\remlab{ImprovedUpperBound}
Since we are interested in characterizing the Erd\H{o}s matrices only up to equivalence, one can, without loss of generality, assume that $I_n\in \{P_1, P_2, \ldots, P_m\}$ in the step $(1)$ above. This yields the upper bound in~\remref{UpperBoundRemark}.
\end{remark}
Declare two families of permutation matrices $\{P_1, P_2, \ldots, P_m \}$ and $\{Q_1, Q_2, \ldots, Q_m\}$ to be \emph{equivalent} if there exist permutation matrices $P, Q$ such that \[\{PP_1Q, PP_2Q, \ldots, PP_mQ\} = \{Q_1, Q_2, \ldots, Q_m\}.\] 
Let $\mathcal{O}_{m, n}$ denote the number of non-equivalent subsets $\{P_1, \ldots, P_m\}\subseteq \Pcal_n$. Then, the upper bound in~\remref{UpperBoundRemark} can be improved to $\sum_{m=1}^{(n-1)^2+1}\mathcal{O}_{m, n}$. 

\subsection{Dimension \texorpdfstring{$n=3$}{n=3}: Revisited}
\label{sec:Dim3}
We now begin the case of dimension $n=3$. It will be useful to set some notations for this section.  Let $S_3=\{e, \sigma, \gamma, \delta, \rho, \rho^2\}$ denote the symmetric group on the set $\{1, 2, 3\}$ where 
\begin{align*}
\sigma = (12),\quad \gamma=(23), \quad \delta= (13), \quad \rho=(123)\;.
\end{align*}
Throughout this section, we will identify a permutation $\pi\in S_3$ with the corresponding permutation matrix via the left-multiplication and we will denote it by $P_{\pi}$. 
With this convention, we have that $\mathrm{Trace}(P_{\pi})=\text{No. of fixed points of }\pi$. 

By Carath\'eodory's theorem, we know that every $3\times 3$ bistochastic matrix can be written as a convex combination of at most $(3-1)^2+1=5$ permutation matrices. In other words, we only need to consider the cases $1\leq m\leq 5$ in the algorithm described in the previous section. Since we are interested in Erd\H{o}s matrices up to equivalence, we only consider the non-equivalent families of permutations matrices of size $m$.

\subsection*{Case $m=1$}
In this case, the identity matrix is the only candidate (up to equivalence) and it is easily verified to be an Erd\H{o}s matrix.  Of course, this is true in all dimensions.

\subsection*{Case $m=2$}
Observe that the family $\{I, P_{\pi}\}$ yields $M=\begin{pmatrix}3 & d \\ d & 3\end{pmatrix}$ where $d$ is the number of fixed points of the permutation $\pi$.  It is easily checked that ${\bf x}= (1/2, 1/2)^{T}$ solves $M{\bf x}=\langle M{\bf x}, {\bf x}\rangle\mathbbm{1}_2$.  In particular, this yields that $\frac{1}{2}I_3+ \frac{1}{2}P_{\pi}$ is an Erd\H{o}s matrix.

By conjugation, it is easy to see that there are precisely two non-equivalent choices for $\{I, P_{\pi}\}$ corresponding to $\pi=\sigma$ and $\pi=\rho$. This yields the following two (non-equivalent) Erd\H{o}s matrices:
\begin{align*}
\frac{1}{2}(I_3+P_{\sigma}) &= \begin{pmatrix}\frac{1}{2} & \frac{1}{2} & 0 \\ \frac{1}{2} & \frac{1}{2} & 0 \\ 0 & 0 & 1 \end{pmatrix}\sim I\oplus J_2\;,\\
\frac{1}{2}(I_3+P_{\rho}) &= \begin{pmatrix}
\frac{1}{2} & \frac{1}{2} & 0 \\ 0 & \frac{1}{2} & \frac{1}{2} \\ \frac{1}{2} & 0 & \frac{1}{2}
\end{pmatrix}\sim T\;.
\end{align*}
The above computation easily extends to higher dimensions and yields the following proposition.
\begin{proposition}
\label{prop:LowerBound}
Let $n\in \mathbb{N}$ and $P$ be an $n\times n$ permutation matrix. Let $A\coloneqq \frac{1}{2}(I_n+ P)$. Then $A$ is an Erd\H{o}s matrix, that is,
\[\|A\|_F^2 = \frac{n+d}{2}= \mathrm{maxTr}(A)\;.\]
Furthemore, $\{I_n, P_1\}$ and $\{I, P_2\}$ are equivalent if and only if $P_1$ and $P_2$ are conjugates.  
\end{proposition}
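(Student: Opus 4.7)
The plan is to establish the proposition in two pieces: (i) verify directly that $A = \frac{1}{2}(I_n + P)$ satisfies $\|A\|_{\operatorname{F}}^2 = \maxtr{A} = (n+d)/2$, and (ii) carry out a short case analysis for the equivalence criterion.

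For piece (i), I would first expand
\[\|A\|_{\operatorname{F}}^2 = \tfrac{1}{4}\pth{\|I_n\|_{\operatorname{F}}^2 + 2\langle I_n, P\rangle_{\operatorname{F}} + \|P\|_{\operatorname{F}}^2} = \tfrac{1}{4}(n + 2d + n) = \tfrac{n+d}{2},\]
using $\|I_n\|_{\operatorname{F}}^2 = \|P\|_{\operatorname{F}}^2 = n$ and $\langle I_n, P\rangle_{\operatorname{F}} = \mathrm{Tr}(P) = d$. For the maximal trace, I would write
\[\maxtr{A} = \max_{Q\in\Pcal_n} \langle A, Q\rangle_{\operatorname{F}} = \tfrac{1}{2}\max_{Q\in\Pcal_n}\pth{\mathrm{Tr}(Q) + \langle P, Q\rangle_{\operatorname{F}}}.\]
The choice $Q = I_n$ attains the value $(n+d)/2$, so the remaining task is to show this is an upper bound.

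The core step is the following combinatorial inequality: for any permutations $\pi, \tau \in S_n$ with associated matrices $P = P_\pi$, $Q = P_\tau$,
\[\mathrm{Tr}(Q) + \langle P, Q\rangle_{\operatorname{F}} = |\mathrm{Fix}(\tau)| + \cardin{\brc{j: \pi(j) = \tau(j)}} \leq n + d.\]
To prove this, let $F_\tau = \mathrm{Fix}(\tau)$ and $E = \brc{j: \pi(j) = \tau(j)}$. Any $j \in F_\tau \cap E$ satisfies $\pi(j) = \tau(j) = j$, hence $F_\tau \cap E \subseteq \mathrm{Fix}(\pi)$. Inclusion–exclusion then gives
\[|F_\tau| + |E| = |F_\tau \cup E| + |F_\tau \cap E| \leq n + d,\]
finishing piece (i).

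For piece (ii), suppose $\{I_n, P_1\} \sim \{I_n, P_2\}$, so there exist permutation matrices $R, S$ with $\{RS, RP_1S\} = \{I_n, P_2\}$. Either $RS = I_n$ and $RP_1S = P_2$, forcing $S = R^{-1}$ and $RP_1R^{-1} = P_2$; or $RS = P_2$ and $RP_1S = I_n$, forcing $S = P_1^{-1}R^{-1}$ and $RP_1^{-1}R^{-1} = P_2$. In the latter case $P_2$ is conjugate to $P_1^{-1}$, and since every permutation is conjugate to its inverse in $S_n$ (they share cycle type), $P_1$ and $P_2$ are conjugate. The converse is immediate: if $P_2 = RP_1R^{-1}$, then $R \cdot I_n \cdot R^{-1} = I_n$ exhibits the equivalence.

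The step I expect to require the most care is the inclusion–exclusion bound in piece (i): a priori it is not obvious that $Q = I_n$ is the maximizer of $\mathrm{Tr}(Q) + \langle P, Q\rangle_{\operatorname{F}}$, since each summand individually can reach $n$. The key insight is precisely that these two quantities cannot simultaneously be large unless $P$ itself has many fixed points, which is exactly what the containment $F_\tau \cap E \subseteq \mathrm{Fix}(\pi)$ encodes.
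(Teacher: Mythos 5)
Your proof is correct, and it is more self-contained than the paper's treatment, which derives the proposition by remarking that the $m=2$ computation from the $3\times 3$ analysis ``easily extends'': there one forms the Gram matrix $M=\begin{pmatrix} n & d\\ d & n\end{pmatrix}$ of $\{I_n,P\}$ and checks that ${\bf x}=(1/2,1/2)^{T}$ solves $M{\bf x}=\langle M{\bf x},{\bf x}\rangle\mathbbm{1}_2$, in the spirit of Proposition~\ref{prop:Characterization} and the algorithm of Section~\ref{sec:algorithm}. Note, however, that condition is only necessary (it encodes $\|A\|_{\operatorname{F}}^2=\langle A,P_i\rangle_{\operatorname{F}}$ for the two matrices in the family), so one still has to confirm that no permutation outside $\{I_n,P\}$ beats the value $(n+d)/2$ --- this is exactly step (4) of the paper's algorithm, left implicit in the proposition. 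Your inclusion--exclusion bound, via $\mathrm{Fix}(\tau)\cap\{j:\pi(j)=\tau(j)\}\subseteq \mathrm{Fix}(\pi)$, supplies precisely this maximality verification, so your direct argument proves the displayed identity $\|A\|_{\operatorname{F}}^2=\tfrac{n+d}{2}=\maxtr{A}$ in full, at the cost of not illustrating the Gram-matrix machinery the paper is advertising. Your case analysis for the equivalence statement (using that $\{RS,RP_1S\}=\{I_n,P_2\}$ forces $P_2$ conjugate to $P_1$ or to $P_1^{-1}$, and that a permutation is conjugate to its inverse in $S_n$) is likewise correct and is not spelled out in the paper at all.
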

It is a well-known fact that the number of conjugacy classes in the symmetric group $S_n$ is equal to the number of integer partitions $p(n)$ of $n$. The above proposition yields a lower bound of $p(n)$ on the number of non-equivalent Erd\H{o}s matrices in $\Bcal_n$.

\subsection*{Case $m=3$}
In this case, we consider the sets of the form $\{I_3, P_{\pi_1}, P_{\pi_2}\}$.  There are $10$ different choices for such sets. However, many of these are equivalent. For completeness, we give the details below. Let us begin with the set $\{I_3, P_{\sigma}, P_{\gamma}\}$.  In this case, the matrix $M$ is given by 
\[M \coloneqq \begin{pmatrix}
3 & 1 & 1 \\ 1 & 3 & 0 \\ 1 & 0 & 3
\end{pmatrix}\;.\]

It is easily verified that ${\bf x} = (1/5, 2/5, 2/5)^{T}$.  This yields the following Erd\H{o}s matrix 
\[\frac{1}{5}I_3 + \frac{2}{5}P_{\sigma}+\frac{2}{5}P_{\gamma} = \begin{pmatrix}
\frac{3}{5} & \frac{2}{5} & 0 \\
\frac{2}{5} & \frac{1}{5} & \frac{2}{5}\\
0 & \frac{2}{5}& \frac{3}{5} 
\end{pmatrix}\sim R \;.\]

Note that the set $\{I_3, P_{\sigma}, P_{\gamma}\}$ is equivalent to $\{I_3, P_{\sigma}, P_{\delta}\}, \{I_3, P_{\gamma}, P_{\delta}\}$ by conjugation. Furthermore, it is also equivalent to the following 
\begin{align*}
&\{I_3, P_{\sigma}, P_{\gamma}P_{\sigma}=P_{\rho^2}\}, \quad \{I_3, P_{\sigma}, P_{\gamma}P_{\sigma}=P_{\rho}\}, \\
&\{I_3, P_{\gamma}, P_{\gamma}P_{\sigma}=P_{\rho}\}, \quad \{I_3, P_{\gamma}, P_{\sigma}P_{\gamma}=P_{\rho^2}\}\;.
\end{align*}
And, similarly, it is also equivalent to the sets $\{I_3,  P_{\delta}, P_{\rho}\}$ and $\{I_3,  P_{\delta}, P_{\rho^2}\}$. This leaves us with the set $\{I_3, P_{\rho}, P_{\rho^2}\}$.  In this case, the matrix $M=3I_3$  and ${\bf x}=(1/3, 1/3, 1/3)^T$. This yields the following Erd\H{o}s matrix
\[
\frac{1}{3}(I_3+ P_{\rho}+ P_{\rho^2}) = J_3\;.
\]

\subsection*{Case $m=4$}
We again have $10$ different choices for the family $\{I_3, P_{\pi_1}, P_{\pi_2}, P_{\pi_3}\}$. We begin with the set $\{I_3, P_{\sigma}, P_{\gamma}, P_{\delta}\}$.  We skip the simple verification that it is equivalent to the following sets 
\begin{align*}
\{I_3, P_{\sigma}, P_{\rho}, P_{\rho^2}\}, \{I_3, P_{\gamma}, P_{\rho}, P_{\rho^2}\}, \{I_3, P_{\delta}, P_{\rho}, P_{\rho^2}\}\;.
\end{align*}
In this case, the matrix $M$ is given by 
\[
\begin{pmatrix}
3 & 1 & 1 & 1 \\
1 & 3 & 0 & 0 \\
1 & 0 & 3 & 0 \\
1 & 0 & 0 & 3
\end{pmatrix}\;,
\]
and it is easy to check that ${\bf x} = (0, 1/3, 1/3, 1/3)$ satisfies $M{\bf x}=\langle M{\bf x}, {\bf x}\rangle$. This yields the Erd\H{o}s matrix $0\cdot I_3+ \frac{1}{3}(P_{\sigma}+P_{\gamma}+P_{\delta})=J_3$. Finally, consider the set $\{I_3, P_{\sigma}, P_{\gamma}, P_{\rho}\}$ (which is equivalent to the remaining possibilities) and verify that 
\[M =\begin{pmatrix}
3 & 1 & 1 & 0 \\
1 & 3 & 0 & 1 \\
1 & 0 & 3 & 1 \\
0 & 1 & 1 & 3
\end{pmatrix}\;,\]
and ${\bf x}= (1/4, 1/4, 1/4, 1/4)^{T}$ satisfies $M{\bf x}=\langle M{\bf x}, {\bf x}\rangle$. This yields the following Erd\H{o}s matrix
\[\frac{1}{4}(I_3+P_{\sigma}+P_{\gamma}+P_{\rho})=\begin{pmatrix}
\frac{1}{2} & \frac{1}{2} & 0 \\
\frac{1}{4} & \frac{1}{4} & \frac{1}{2}\\
\frac{1}{4} & \frac{1}{4} & \frac{1}{2} 
\end{pmatrix}\sim S\;.\]

\subsection*{Case $m=5$}
It is easily verified that any collection of $5$ permutation matrices is equivalent to $\{I_3, P_{\rho}, P_{\sigma}, P_{\gamma}, P_{\delta}\}$. This yields
\[M=\begin{pmatrix}
3 & 0 & 1 & 1 & 1 \\
0 & 3 & 1 & 1  & 1 \\
1 & 1 & 3 & 0 & 0 \\
1 & 1 & 0 & 3 & 0 \\
1 & 1 & 0 & 0 & 3 
\end{pmatrix}\;,\]
and ${\bf x}=(0, 0, 1/3, 1/3, 1/3)^{T}$ and hence the Erd\H{o}s matrix $J_3$.

\begin{remark}
In dimension $n=3$, any collection of permutation matrices $\{P_1, \ldots, P_m\}$ for $m\leq 5$ is, in fact, linearly independent (see~\cite{louck2011applications}). Therefore, we do not need to check the affine independence condition.  
\end{remark}

\subsection{Further refinement}
\label{sec:refinement}
In this section, we show that it is enough to consider the collection of permutation matrices $\{P_1, \ldots, P_m\}$ that are linearly independent (not affinely independent) in the algorithm described in Section~\ref{sec:algorithm}.

\begin{proposition}
    Let $\{P_1, \ldots, P_m\}$ be an affinely independent collection of permutation matrices that is not linearly independent. Let $M\in \R^{m\times m}$ be the matrix such that $M_{i, j}=\langle P_i, P_j\rangle_{\operatorname{F}}$. Let ${\bf x}$ be the unique solution to 
    \begin{equation}
     \eqlab{MasterEquation}
        M{\bf x} = \langle M{\bf x}, {\bf x}\rangle\mathbbm{1}_m, \quad \langle \mathbbm{1}_m, {\bf x}\rangle =1\;.
    \end{equation}
Then, ${\bf x}$ has at least $1$ zero entry. 
\end{proposition}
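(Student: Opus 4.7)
My plan is to establish the proposition by showing that its hypothesis is in fact vacuous: no collection of permutation matrices can be simultaneously affinely independent and linearly dependent.

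The key step will be the following observation. Every $n\times n$ permutation matrix $P$ satisfies $P\mathbbm{1}_n=\mathbbm{1}_n$, since each row of $P$ has exactly one entry equal to $1$ and the rest equal to $0$. Hence, given any linear relation $\sum_{i=1}^{m}c_iP_i=0$, right-multiplying both sides by $\mathbbm{1}_n$ yields $\bigl(\sum_{i=1}^{m}c_i\bigr)\mathbbm{1}_n=0$, which forces $\sum_{i=1}^{m}c_i=0$. In other words, every nontrivial linear dependence among permutation matrices is automatically an affine dependence in the sense of Definition~\ref{def:AffInd}. By contrapositive, every affinely independent collection of permutation matrices is already linearly independent.

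Consequently, no family $\{P_1,\ldots,P_m\}\subseteq\Pcal_n$ can be simultaneously affinely independent and linearly dependent, so the hypothesis of the proposition cannot be satisfied by any input, and the conclusion holds vacuously (the ``unique solution'' referenced in the statement has no valid input to correspond to).

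I do not foresee any technical obstacle beyond spotting the row-sum identity $P\mathbbm{1}_n=\mathbbm{1}_n$; once invoked, the equivalence of affine and linear independence for permutation matrices is immediate. The practical content of the proposition, in light of this, is to make explicit the refinement advertised at the start of Section~\ref{sec:refinement}: combined with Lemma~\ref{lemma:redAffine}, it legitimizes restricting step~(1) of the algorithm of Section~\ref{sec:algorithm} to linearly independent families of permutation matrices without losing any Erd\H{o}s matrices.
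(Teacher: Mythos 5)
Your key observation is correct, and it is a genuinely different route from the paper's. Since every permutation matrix has all row sums equal to $1$ (so $P\mathbbm{1}_n=\mathbbm{1}_n$), any nontrivial relation $\sum_{i}c_iP_i=0$ forces $\sum_i c_i=0$; hence for permutation matrices affine independence coincides with linear independence, the hypothesis ``affinely independent but not linearly independent'' is unsatisfiable, and the proposition as literally stated is vacuously true. The paper instead argues constructively: it uses the linear dependence to write (after relabelling) $P_m=\sum_{i=1}^{m-1}\beta_iP_i$ with $\sum_{i=1}^{m-1}\beta_i=1$, takes $P_1,\ldots,P_{m-1}$ linearly independent so that the principal Gram submatrix $\widetilde{M}$ is invertible, solves the reduced system for ${\bf y}$, and invokes Lemma~\ref{lem:Uniqueness} to conclude that $({\bf y},0)^{T}$ is the unique solution of \Eqref{MasterEquation}, which supplies the zero entry. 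Your argument in fact exposes that the statement and its proof are not mutually consistent as written: the paper's opening step exhibits an affine dependence among $\{P_1,\ldots,P_m\}$, contradicting the stated hypothesis (and the concluding appeal to Lemma~\ref{lem:Uniqueness} requires affine independence of the full family, which by your row-sum identity would force linear independence); so the proposition really ought to be reformulated, e.g.\ assuming only that the family is linearly dependent while $P_1,\ldots,P_{m-1}$ are linearly independent, and claiming the existence (not uniqueness) of a solution supported off the last coordinate. What your route buys is that the downstream goal is immediate: affine independence being the same as linear independence for permutation matrices, combined with \lemref{redAffine}, gives Corollary~\ref{cor:ErdosStructure} directly and justifies restricting step (1) of the algorithm to linearly independent families without this proposition at all. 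What the paper's intended, non-vacuous version buys is the explicit algorithmic reduction for a redundant family, but it is only meaningful after the hypotheses are repaired.
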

\begin{proof}
    Since $\{P_1, \ldots, P_m\}$ is affinely independent, (possibly after some relabelling) we can assume that there exist real numbers $\beta_{i}, i\in [m-1]$ such that $\sum_{i=1}^{m-1}\beta_i=1$ and 
    \[ P_{m}= \sum_{i=1}^{m-1}\beta_iP_i\;.\]
    We can assume without loss of generality that $P_1, \ldots, P_{m-1}$ are linearly independent. Let $\widetilde{M}\in \R^{(m-1)\times (m-1)}$ be the $(m-1)\times (m-1)$ principal submatrix of $M$. Notice that $\widetilde{M}=\left\langle P_i, P_j\rangle_{\operatorname{F}}\right)_{1\leq i, j\leq m-1}$ is a Gram matrix. Since $P_1, \ldots, P_{m-1}$ are linearly independent, it follows from~\cite[Theorem 7.2.10]{horn2012matrix} that $\widetilde{M}$ is invertible. Let ${\bf y}$ be the unique solution to 
    \[\widetilde{M}{\bf y}= \langle \widetilde{M}{\bf y}, {\bf y}\rangle \mathbbm{1}_{m-1}, \quad \langle \mathbbm{1}_{m-1}, {\bf y}\rangle =1\;.\]
By Lemma~\ref{lem:Uniqueness}, we conclude that $\begin{pmatrix}
        {\bf y}\\ 0
    \end{pmatrix}$ is the unique solution to~\Eqref{MasterEquation}.   
\end{proof}
This immediately yields the following corollary that sheds light on the structure of Erd\H{o}s matrices.
\begin{corollary}
\label{cor:ErdosStructure}
    Every Erd\H{o}s matrix $A\in \Bcal_n$ can be written as a convex combination of linearly independent permutation matrices. 
\end{corollary}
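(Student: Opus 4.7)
My strategy is to deduce this corollary by combining \lemref{redAffine} with the proposition immediately preceding the corollary. Let $A \in \Bcal_n$ be an arbitrary Erd\H{o}s matrix. Since the extreme points of $\Bcal_n$ are precisely the permutation matrices (Birkhoff--von Neumann), \lemref{redAffine} furnishes a representation
\[A = \sum_{i=1}^{m} x_i P_i,\]
where $\{P_1, \ldots, P_m\} \subseteq \Pcal_n$ is \emph{affinely} independent and $x_i > 0$ for all $i \in [m]$. The entire task is then to upgrade affine independence to \emph{linear} independence for any such minimal representation arising from an Erd\H{o}s matrix.

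Suppose, towards a contradiction, that $\{P_1, \ldots, P_m\}$ is affinely independent but not linearly independent. By Proposition~\ref{prop:Characterization}, the coefficient vector $\mathbf{x} = (x_1, \ldots, x_m)^T$ solves
\[M\mathbf{x} = \langle M\mathbf{x}, \mathbf{x}\rangle \mathbbm{1}_m, \qquad M_{i,j} = \langle P_i, P_j\rangle_{\operatorname{F}},\]
and moreover satisfies $\langle \mathbbm{1}_m, \mathbf{x}\rangle = 1$ because $A$ is bistochastic. The preceding proposition now applies verbatim and asserts that the \emph{unique} solution to this system with $\langle \mathbbm{1}_m, \mathbf{x}\rangle = 1$ has at least one zero coordinate. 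This contradicts the strict positivity $x_i > 0$ guaranteed by the minimal representation from \lemref{redAffine}. Hence the collection $\{P_1, \ldots, P_m\}$ must be linearly independent, which is exactly the claim of the corollary.

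I do not anticipate any serious obstacle. The heavy lifting has already been done by Proposition~\ref{prop:Characterization} (which places $\mathbf{x}$ inside the solution set of the master equation), \lemref{redAffine} (which guarantees strict positivity of coefficients over an affinely independent basis of extreme points), and the preceding proposition (which locates the unique such solution on a coordinate hyperplane when linear dependence fails). The one delicate ingredient that the whole chain rests upon is the uniqueness statement inside the preceding proposition, which in turn comes from Lemma~\ref{lem:Uniqueness}; as long as that uniqueness is correctly invoked, the present corollary follows in a couple of lines.
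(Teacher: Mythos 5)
Your argument is correct and is essentially the paper's own: the corollary is stated as an immediate consequence of the preceding proposition, and the intended derivation is exactly your chain — \lemref{redAffine} gives a strictly positive affinely independent representation, Proposition~\ref{prop:Characterization} places its coefficient vector among the solutions of the master equation, and the proposition (with uniqueness from Lemma~\ref{lem:Uniqueness}) forces a zero coordinate if the collection were linearly dependent, a contradiction. No discrepancy with the paper's route.
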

This simple corollary has important consequences. First of all, it tells us that Step (1) of the above algorithm suffices to consider the linearly independent collection of permutation matrices. Furthermore, if $\{P_1, \ldots, P_m\}$ is a linearly independent collection of permutation matrices, then the matrix $M$ such that $M_{i, j}=\langle P_i, P_j\rangle_F$ is positive definite and hence invertible~\cite[Theorem 7.2.10]{horn2012matrix}. Set ${\bf y}= M^{-1}\mathbbm{1}_m$ and observe that $\langle \mathbbm{1}_m, {\bf y}\rangle= \langle M{\bf y}, {\bf y}\rangle >0$ because ${\bf y}$ is non-zero and $M$ is positive definite. Define ${\bf x} = \frac{{\bf y}}{\langle \mathbbm{1}_m, {\bf y}\rangle}$. By Lemma~\ref{lem:Uniqueness} we obtain that ${\bf x}$ is the unique solution to~\Eqref{MasterEquation}. The proof of Proposition~\ref{prop:CompleteChar} is now immediate. The above discussion also yields the proof of Theorem~\ref{thm:RationalEntries} that we include below.
\begin{proofof}[Theorem~\ref{thm:RationalEntries}]
    Let $A\in \Bcal_n$ be an Erd\H{o}s matrix. By Corollary~\ref{cor:ErdosStructure}, there exists a linearly independent collection of permutation matrices $\{P_1, \ldots, P_m\}$ and ${\bf x}=(x_1, \ldots, x_m)^T$ such that 
    \[ A= \sum_{i=1}^{m}x_iP_i, \quad \sum_{i=1}^{m}x_i = 1, \quad x_i>0\;\;\forall i\in [m]\;.\]
    Let $M$ be defined as $M_{ij} = \langle P_i, P_j\rangle_{\operatorname{F}}$ for $i, j\in [m]$. Let ${\bf x}=\frac{M^{-1}\mathbbm{1}_m}{\big\langle \mathbbm{1}_m, M^{-1}\mathbbm{1}_m\big\rangle}$ be the unique solution to~\Eqref{MasterEquation}.
    Since the entries of $M$ are non-negative integers, it follows that $M^{-1}$ has only rational entries, and hence ${\bf x}$ has rational entries. This completes the proof.
\end{proofof}
Notice that in our computations for dimension $n=3$ in Section~\ref{sec:Dim3} we have that ${\bf x}= M^{-1}\mathbbm{1}_m$ always satisfies $x_i\geq 0$. The following example shows in general $M^{-1}\mathbbm{1}_m$ can have negative entries.  
\begin{example}
\label{ex:NonnegativityRowSum}
Let $S_4$ be the group of all permutations of the set $\{1, \ldots, 4\}$. Let $\pi_1$ be the identity permutation and let $\pi_2=(12), \pi_3=(23)$ and $\pi_4=(34)$. Let $P_{i}$ denote the permutation matrix corresponding to the permutation $\pi_i$ (identified via left multiplication) and let $M$ denote the $4\times 4$ matrix such that $M(i, j)=\langle P_i, P_j\rangle_{\operatorname{F}}$. It is easy to verify that 
\[M = \begin{pmatrix}
    4 & 2 & 2& 2\\
    2 & 4 & 1 & 0\\
    2 & 1 & 4 & 1\\
    2 & 0 & 1 & 4
\end{pmatrix}\;, \qquad M^{-1}=\frac{1}{24}\begin{pmatrix} 14 & -6 & -4 & -6\\
-6 & 9 & 0 & 3\\
-4 & 0 & 8 & 0\\
-6 & 3 & 0 & 9
\end{pmatrix}\;. \]
Note that the first entry of $M^{-1}\mathbbm{1}_4$ is $-1/12$ which is negative.  
\end{example}
This example naturally raises the following question which will be needed to obtain the number of non-equivalent Erd\H{o}s matrices in any dimensions. 
\begin{question}
\label{Question:Positivity}
Let $\{P_1, \ldots, P_m\}$ be a linearly independent collection of permutation matrices and let $M$ be the $m
\times m$ matrix such that $M(i, j)=\langle P_i, P_j\rangle_{\operatorname{F}}$. Under what conditions are all the entries of $M^{-1}\mathbbm{1}$ positive?
\end{question}

\section*{Acknowledgments}
I thank Junaid Hasan for suggesting the reference~\cite{louck2011applications}. Example~\ref{ex:NonnegativityRowSum} is adapted from an example I learned from Aman Kushwaha. I also thank Andrea Ottolini and Mayuresh Londhe for several insightful discussions. I am also grateful to Prof. Fr\'ed\'eric Morneau-Gu\'erin for reading the first draft of the paper and for his encouraging remarks. Finally, I must thank two referees for making several important suggestions that greatly improved the paper. In particular, the Example~\ref{ex:NonnegativityRowSum} was found after a question that one of the referees asked.


\BibTexMode{%
   \bibliographystyle{alpha}
   \bibliography{reference}
}%
\BibLatexMode{\printbibliography}

\end{document}